\tikzstyle{punkt}=[circle, fill=black, minimum size=1mm,inner sep=0pt, draw]
\def\frk{\frak}               % font for "Fraktur"
\def\Phi{{\frk n}}
\def\Phi{{\frk N}}
\def\MB{{\mathcal B}}
\def\opn#1#2{\def#1{\operatorname{#2}}} % to make operators
\opn\chara{char}
\opn\length{\ell}
\opn\pd{pd}
\opn\rk{rk}
\opn\projdim{proj\,dim}
\opn\injdim{inj\,dim}
\opn\rank{rank}
\opn\depth{depth}
\opn\grade{grade}
\opn\height{height}
\opn\embdim{emb\,dim}
\opn\codim{codim}
\opn\Tr{Tr}
\opn\bigrank{big\,rank}
\opn\superheight{superheight}
\opn\lcm{lcm}
\opn\trdeg{tr\,deg}
\opn\reg{reg}
\opn\lreg{lreg}
\opn\ini{in}
\opn\lpd{lpd}
\opn\size{size}
\opn\bigsize{bigsize}
\opn\cosize{cosize}
\opn\bigcosize{bigcosize}
\opn\sdepth{sdepth}
\opn\sreg{sreg}
\opn\link{link}
\opn\fdepth{fdepth}
\opn\lin{lin}
\opn\ini{in}
\opn\div{div}
\opn\Div{Div}
\opn\cl{cl}
\opn\Cl{Cl}
\opn\Spec{Spec}
\opn\Supp{Supp}
\opn\supp{supp}
\opn\Sing{Sing}
\opn\Ass{Ass}
\opn\Min{Min}
\opn\Mon{Mon}
\opn\dstab{dstab}
\opn\astab{astab}
\opn\Syz{Syz}
\opn\Ann{Ann}
\opn\Rad{Rad}
\opn\Soc{Soc}
\opn\Im{Im}
\opn\Ker{Ker}
\opn\Coker{Coker}
\opn\Am{Am}
\opn\Hom{Hom}
\opn\Tor{Tor}
\opn\Ext{Ext}
\opn\End{End}
\opn\Aut{Aut}
\opn\id{id}
\opn\nat{nat}
\opn\pff{pf}%   \pf exists already
\opn\Pf{Pf}
\opn\GL{GL}
\opn\SL{SL}
\opn\mod{mod}
\opn\ord{ord}
\opn\Gin{Gin}
\opn\Hilb{Hilb}
\opn\sort{sort}
\opn\initial{init}
\opn\ende{end}
\opn\height{height}
\opn\type{type}
\opn\mdeg{mdeg}
\opn\aff{aff}
\opn\con{conv}
\opn\relint{relint}
\opn\st{st}
\opn\lk{lk}
\opn\cn{cn}
\opn\core{core}
\opn\vol{vol}
\opn\link{link}
\opn\star{star}
\opn\lex{lex}
\opn\sign{sign}
\opn\gr{gr}
\def\pot#1#2{#1[\kern-0.28ex[#2]\kern-0.28ex]}
\opn\dirlim{\underrightarrow{\lim}}
\opn\inivlim{\underleftarrow{\lim}}
\let\to=\rightarrow
\def\Implies{\ifmmode\Longrightarrow \else
	\unskip${}\Longrightarrow{}$\ignorespaces\fi}
\def\implies{\ifmmode\Rightarrow \else
	\unskip${}\Rightarrow{}$\ignorespaces\fi}
\def\iff{\ifmmode\Longleftrightarrow \else
	\unskip${}\Longleftrightarrow{}$\ignorespaces\fi}
\newtheorem{Theorem}{Theorem}[section]
\newtheorem{Corollary}[Theorem]{Corollary}
\newtheorem{Proposition}[Theorem]{Proposition}
\newtheorem{Remark}[Theorem]{Remark}
\newtheorem{Example}[Theorem]{Example}
\newtheorem{Definition}[Theorem]{Definition}
\let\epsilon\varepsilon
\let\kappa=\varkappa
\def\pnt{{\raise0.5mm\hbox{\large\bf.}}}
\begin{document}
	\title{Combinatorial upper bounds for the smallest eigenvalue of a graph}
	
	\author {Aryan Esmailpour, Sara Saeedi Madani, Dariush Kiani}
	
\address{Aryan Esmailpour, Department of Mathematics and Computer Science, Amirkabir University of Technology (Tehran Polytechnic), Iran}
\email{aryanesmailpour@gmail.com}

\address{Sara Saeedi Madani, Department of Mathematics and Computer Science, Amirkabir University of Technology (Tehran Polytechnic), Iran, and School of Mathematics, Institute for Research in Fundamental Sciences (IPM), Tehran, Iran}
\email{sarasaeedi@aut.ac.ir}

\address{Dariush Kiani, Department of Mathematics and Computer Science, Amirkabir University of Technology (Tehran Polytechnic), Iran}
\email{dkiani@aut.ac.ir}

	\begin{abstract}
		Let $G$ be a graph, and let $\lambda(G)$ denote the smallest eigenvalue of $G$. First, we provide an upper bound for $\lambda(G)$ based on induced bipartite subgraphs of $G$. Consequently, we extract two other upper bounds, one relying on the average degrees of induced bipartite subgraphs and a more explicit one in terms of the chromatic number and the independence number of $G$. In particular, motivated by our bounds, we introduce two graph invariants that are of interest on their own. Finally, special attention goes to the investigation of the sharpness of our bounds in various classes of graphs as well as the comparison with an existing well-known upper bound.  
	\end{abstract}

	\subjclass[2010]{05C50}
	\keywords{Graph eigenvalues, average degree, independence number, chromatic number}
	
	\maketitle
	
	\section{Introduction}\label{introduction}
	
Understanding the adjacency eigenvalues of a graph in terms of combinatorial invariants and properties of the graph has been an important problem in spectral graph theory. In particular, the smallest eigenvalue of a graph has been of special interest to many authors, see for example \cite{Alon}, \cite{Cioaba}, \cite{Nikiforov}, \cite{Nikiforov2}. There are many papers in the literature that study both lower bounds \cite{Alon}, \cite{Cioaba}, \cite{Cioaba2}, and upper bounds \cite{Brouwer2}, \cite{Nikiforov} for the smallest eigenvalue. 

The smallest eigenvalue of a graph is known to be closely related to its bipartite subgraphs. This approach has taken the attention of several authors, see \cite{Min1}, \cite{Min2}, \cite{Kwan}, \cite{Nikiforov}, and \cite{Nikiforov1}. Moreover, because of this connection, it is natural to consider the smallest eigenvalue as a measure of, roughly speaking, how bipartite the graph is. The size of a graph's maximum cut is one common way to interpret its level of bipartiteness. Therefore, many approximation algorithms for the so-called Max Cut problem which is an NP-hard problem have been provided through the aforementioned point of view, see for example~\cite{Alon}, \cite{Goemans}, \cite{NPHard} and~\cite{Trevisan}.  

Aside from the size of the maximum cut, other invariants related to the smallest eigenvalue have been defined to measure bipartiteness of a graph \cite{Min1}, \cite{Min2}, \cite{Brouwer}. In this paper, we define two additional invariants and study their connection to the smallest eigenvalue. We focus on the smallest eigenvalue of a graph and provide some combinatorial upper bounds for it. Then we extend our study by providing several cases for which our bounds are sharp as well as comparing one of our bounds with a nice bound due to Nikiforov \cite{Nikiforov}. 
	
	The organization of this paper is as follows. In Section~\ref{bound}, we prove our main result which gives an upper bound for the smallest eigenvalue of a graph~$G$, see Theorem~\ref{hndsi}. This bound is based on the induced bipartite subgraphs of $G$. Motivated by this bound, we define a new graph invariant of $G$, denoted by $\eta(G)$, see Definition~\ref{mxavdg}. Obtaining the exact value of this invariant for any graph is not easy, but Corollary~\ref{hndsicor} gives an upper bound for it. The same corollary also improves the bound given in Theorem~\ref{hndsi}. As a consequence of Theorem~\ref{hndsi}, we obtain another upper bound relying on the average degrees of induced bipartite subgraphs of~$G$. Motivated by this result, we also define another graph invariant for $G$, denoted by~$\iota(G)$, see~Definition~\ref{avdegdef}. As a first step to compute this new invariant for any graph, Corollary~\ref{avdegcol} gives an upper bound for it by improving Theorem~\ref{avdeg}. It is clear that $\iota(G)$ is bounded above by $\mathrm{mad}(G)$, the so-called maximum average degree of $G$. But, in Corollary~\ref{avdegcol}, $\iota(G)$ can not be replaced by $\mathrm{mad}(G)$. We also extract a more explicit upper bound for the smallest eigenvalue of~$G$ in Theorem~\ref{tm}. The chromatic number and the independence number of $G$ play an important role in this bound. 
	
	In Section~\ref{sharp}, we discuss several cases for which the bounds given in Section~\ref{bound} are sharp. First, we show that the class of graphs for which $\iota(G)$ is sharp is closed under the Cartesian product of graphs. Next, we focus on certain classes of graphs and evaluate the bound given in Theorem~\ref{tm} for them. For this purpose, we consider regular bipartite graphs and complete multipartite graphs with equal parts. In addition, we construct a family of graphs, based on the join product of graphs, for which the bounds given in Corollary~\ref{hndsicor} and Corollary~\ref{avdegcol} are not sharp. Indeed, in Example~\ref{badexmpl} we show that the difference between $- \eta(G)$ and $\lambda(G)$ can grow to $O(|V(G)|)$ in general. 
	
	In Section~\ref{cmp}, we compare our bound given in Theorem~\ref{tm} with a nice well-known bound given in~\cite{Nikiforov} by Nikiforov. Indeed, we show that if $\chi(G)=2$~or~$3$, or if $G$ is planar, then our bound is less than or equal to Nikiforov's, see~Theorem~\ref{comparison}. From an asymptotic point of view, in Corollary~\ref{almost all}, we deduce that the same result holds for almost all $K_t$-free graphs for $t=3,4$.               
	
	Throughout the paper, all graphs are finite and simple, (i.e. with no loops, multiple and directed edges), and by eigenvalues of a graph, we mean eigenvalues of its adjacency matrix.

	\section{Upper bounds for the smallest eigenvalue of a graph}\label{bound}
	
	Let $G$ be a graph on $n$ vertices with the adjacency matrix $A$ and (adjacency) eigenvalues $\lambda_{1}(G) \geq \lambda_{2}(G) \geq \cdots \geq \lambda_{n}(G)$. For simplicity, we denote $\lambda_{n}(G)$ by $\lambda(G)$. As usual, we denote the set of vertices and the set of edges of $G$ by $V(G)$ and $E(G)$, respectively. We denote the set of all induced bipartite subgraphs of $G$ by $\MB(G)$. In this section, we provide some upper bounds for $\lambda(G)$ in terms of certain invariants of $G$. The following theorem is our first result. 
	
	\begin{Theorem}\label{hndsi}
	Let $G$ be a graph with at least two vertices. If $H \in \MB(G)$ and $V(H) = V_1 \cup V_2$ is a bipartition of the vertices of $H$. Then
	\[
	\lambda(G) \leq \frac{-|E(H)|}{\sqrt{|V_1| |V_2|}}.
	\]
	\end{Theorem}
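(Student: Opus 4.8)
The plan is to bound $\lambda(G)$ from above via the variational (Rayleigh--Courant--Fischer) characterization of the smallest eigenvalue. Since $A$ is the real symmetric adjacency matrix of $G$, we have
\[
\lambda(G) = \min_{x \neq 0} \frac{x^{\top} A x}{x^{\top} x},
\]
where the minimum runs over nonzero vectors $x \in \RR^{V(G)}$. Hence it suffices to produce a single, well-chosen test vector whose Rayleigh quotient equals the claimed quantity. First I would exploit the bipartition of $H$ by assigning opposite signs to the two sides and zero everywhere else: set $x_v = a$ for $v \in V_1$, $x_v = -b$ for $v \in V_2$, and $x_v = 0$ for $v \notin V(H)$, where $a,b>0$ are weights to be fixed at the end.

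The key structural step is to evaluate $x^{\top} A x = 2 \sum_{\{u,v\} \in E(G)} x_u x_v$ using both hypotheses on $H$. Because $x$ vanishes off $V(H)$, only edges of $G$ lying inside $V(H)$ contribute; because $H$ is an \emph{induced} subgraph, those edges are exactly $E(H)$; and because $H$ is bipartite with parts $V_1,V_2$, each such edge joins $V_1$ to $V_2$ and so contributes $x_u x_v = -ab$. This yields $x^{\top} A x = -2ab\,|E(H)|$, while $x^{\top} x = a^2|V_1| + b^2|V_2|$, so that
\[
\lambda(G) \leq \frac{-2ab\,|E(H)|}{a^2|V_1| + b^2|V_2|}.
\]

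Finally, I would optimize over the weights to make the right-hand side as negative as possible. By the AM--GM inequality, $a^2|V_1| + b^2|V_2| \geq 2ab\sqrt{|V_1||V_2|}$, with equality precisely when $a^2|V_1| = b^2|V_2|$; choosing, for instance, $a = \sqrt{|V_2|}$ and $b = \sqrt{|V_1|}$ attains this and collapses the estimate to $-|E(H)|/\sqrt{|V_1||V_2|}$, as required.

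I do not expect a genuine obstacle here: the argument is a short variational computation, and the only real subtlety is arranging the denominator to match the prescribed $\sqrt{|V_1||V_2|}$, which is exactly what the AM--GM-optimal weighting $a:b = \sqrt{|V_2|}:\sqrt{|V_1|}$ delivers. One should only record the implicit nondegeneracy condition that $V_1$ and $V_2$ are both nonempty, so that the test vector is nonzero and the denominator does not vanish.
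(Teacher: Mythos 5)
Your proof is correct and essentially identical to the paper's: the authors use the same Rayleigh-quotient argument with the test vector that takes the value $1/\sqrt{|V_1|}$ on $V_1$, $-1/\sqrt{|V_2|}$ on $V_2$, and $0$ elsewhere, which is exactly your AM--GM-optimal weighting $a:b=\sqrt{|V_2|}:\sqrt{|V_1|}$ up to rescaling. The only cosmetic difference is that the paper plugs in these weights from the start rather than optimizing over general $a,b$ at the end.
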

	\begin{proof}
	We define the vector $\mathbf{x} = (x_1, x_2, ..., x_n)^T$ such that
	    \[
	    x_i =
\left\{
	\begin{array}{ll}
		\frac{1}{\sqrt{|V_1|}}  & \mbox{if } i \in V_1 \\[3mm]
		\frac{-1}{\sqrt{|V_2|}}  & \mbox{if } i \in V_2 \\[3mm]
		0 & \mbox{otherwise } 
	\end{array}
\right.
	    \]
	    for any $i = 1,\ldots, n$. It is clear that $\mathbf{x}^T \mathbf{x} = 2$. On the other hand, it is easily seen that 
	    \[
	    \mathbf{x}^T A \mathbf{x} = 2\sum_{\{i,j\} \in E(G)} x_i x_j.
	    \]
	    Therefore, by using the Rayleigh quotient, we have
	    \begin{equation}\label{eq11}
	    \lambda(G) \leq \frac{\mathbf{x}^T A \mathbf{x}}{\mathbf{x}^T \mathbf{x}}~ = ~\frac{\mathbf{x}^T A \mathbf{x}}{2}~ = \sum_{\{i,j\} \in E(G)} x_i x_j.
	    \end{equation}
	    By the definition of $\mathbf{x}$, we have $x_i x_j \neq 0$ if and only if $i,j \in V_1 \cup V_2$. So, we get
	    \begin{equation}\label{eq12}
	    \sum_{\{i,j\} \in E(G)} x_i x_j ~=~ \sum_{\{i,j\} \in E(H)} x_i x_j ~=~ \sum_{\{i,j\} \in E(H)} \frac{-1}{\sqrt{|V_1||V_2|}}
	    =~ \frac{-|E(H)|}{\sqrt{|V_1||V_2|}}.
	    \end{equation}
	    By (\ref{eq11}) and (\ref{eq12}), we get
	    \[
	    \lambda(G) \leq \frac{-|E(H)|}{\sqrt{|V_1| |V_2|}},
	    \]
	    as desired.
	\end{proof}
	
	Motivated by Theorem \ref{hndsi}, we define the following invariant for graphs:
	
	\begin{Definition}\label{mxavdg}
	Let $G$ be a graph with at least two vertices. Then we define
	\[
	\eta(G) = \max \left\{ {\frac{|E(H)|}{\sqrt{|V_1| |V_2|}}}: \text{$H \in \MB(G)$ with the bipartition $V_1 \cup V_2$} \right\}.
	\]
	\end{Definition}
	
	As an immediate consequence of Theorem \ref{hndsi}, we get 
	
	\begin{Corollary}\label{hndsicor}
	Let $G$ be a graph with at least two vertices. Then
	\[
	\lambda(G) \leq -\eta(G).
	\]
	\end{Corollary}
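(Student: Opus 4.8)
The plan is to deduce Corollary~\ref{hndsicor} directly from Theorem~\ref{hndsi} by unwinding the definition of $\eta(G)$. The corollary asserts that $\lambda(G) \le -\eta(G)$, and since $\eta(G)$ is defined as a maximum over all induced bipartite subgraphs $H$ and their bipartitions, the natural route is to observe that Theorem~\ref{hndsi} gives an inequality holding for \emph{every} such $H$ and bipartition simultaneously.

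First I would note that Theorem~\ref{hndsi} states that for each $H \in \MB(G)$ with bipartition $V(H) = V_1 \cup V_2$, we have
\[
\lambda(G) \le \frac{-|E(H)|}{\sqrt{|V_1||V_2|}} = -\frac{|E(H)|}{\sqrt{|V_1||V_2|}}.
\]
Equivalently, rearranging gives $\frac{|E(H)|}{\sqrt{|V_1||V_2|}} \le -\lambda(G)$ for every admissible pair $(H, V_1 \cup V_2)$. Thus $-\lambda(G)$ is an upper bound for the entire set of quantities over which the maximum in Definition~\ref{mxavdg} is taken.

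Next I would invoke the fact that the maximum of a set is the least upper bound attained by a member of the set; since every element of that set is bounded above by $-\lambda(G)$, so is the maximum. Hence $\eta(G) \le -\lambda(G)$, which upon rearranging yields $\lambda(G) \le -\eta(G)$, as desired. One should briefly remark that the maximum in Definition~\ref{mxavdg} is well defined: the graph $G$ has at least two vertices, and $\MB(G)$ is nonempty (for instance any single edge, or even a single vertex paired with an empty part, furnishes an induced bipartite subgraph), while the collection of induced subgraphs of a finite graph is finite, so the maximum is attained.

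I do not expect any genuine obstacle here, as the corollary is a formal consequence of the theorem together with the definition. The only point deserving care is the well-definedness and nonemptiness of the maximizing set so that $\eta(G)$ is a bona fide real number; once that is settled, the inequality is immediate.
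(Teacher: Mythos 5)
Your proof is correct and is exactly the paper's (implicit) argument: the paper states the corollary as an immediate consequence of Theorem~\ref{hndsi}, namely that the theorem's inequality holds for every $H \in \MB(G)$ and every bipartition, hence for the maximizing one in Definition~\ref{mxavdg}. One tiny caveat: your parenthetical suggestion that ``a single vertex paired with an empty part'' witnesses nonemptiness is spurious, since an empty part makes $\sqrt{|V_1||V_2|}=0$ and the ratio undefined --- instead take any two vertices of $G$, one in each part (with or without an edge between them); this does not affect the validity of the rest of your argument.
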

	Recall that the average degree of $G$, denoted by $\overline{d}(G)$ is $\frac{2|E(G)|}{|V(G)|}$. Using the notation in Theorem \ref{hndsi}, we have
	\[
	\overline{d}(H) ~=~ \frac{2|E(H)|}{|V_1| + |V_2|} ~=~ \frac{|E(H)|}{\frac{1}{2}(|V_1| + |V_2|)} \leq \frac{|E(H)|}{\sqrt{|V_1| |V_2|}}.
	\]
	Therefore, we get
	\begin{Theorem}\label{avdeg}
	Let $G$ be a graph with at least two vertices and $H \in \MB(G)$. Then
	\[
    \lambda(G) \leq -\overline{d}(H).
	\]
	\end{Theorem}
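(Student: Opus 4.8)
The plan is to derive Theorem~\ref{avdeg} as an immediate consequence of Theorem~\ref{hndsi} together with the elementary inequality relating the arithmetic and geometric means of $|V_1|$ and $|V_2|$. The setup is already fixed for us: we are given a graph $G$ with at least two vertices and an induced bipartite subgraph $H \in \MB(G)$ with bipartition $V(H) = V_1 \cup V_2$. My goal is simply to bound $\lambda(G)$ by $-\overline{d}(H)$, where $\overline{d}(H) = 2|E(H)|/|V(H)|$ is the average degree of $H$.

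First I would recall that Theorem~\ref{hndsi} already gives
\[
\lambda(G) \leq \frac{-|E(H)|}{\sqrt{|V_1| |V_2|}}.
\]
So it suffices to show that the right-hand side is at most $-\overline{d}(H)$, i.e.\ that $\overline{d}(H) \leq |E(H)|/\sqrt{|V_1||V_2|}$. The key step is the AM--GM inequality, which asserts $\sqrt{|V_1||V_2|} \leq \tfrac{1}{2}(|V_1| + |V_2|)$. Since $|E(H)| \geq 0$, dividing $|E(H)|$ by the smaller quantity $\sqrt{|V_1||V_2|}$ yields a value at least as large as dividing by $\tfrac{1}{2}(|V_1|+|V_2|)$; that is,
\[
\overline{d}(H) = \frac{|E(H)|}{\tfrac{1}{2}(|V_1| + |V_2|)} \leq \frac{|E(H)|}{\sqrt{|V_1||V_2|}}.
\]
Negating this chain of inequalities and combining with Theorem~\ref{hndsi} gives $\lambda(G) \leq -|E(H)|/\sqrt{|V_1||V_2|} \leq -\overline{d}(H)$, which is exactly the claim.

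Honestly, there is no genuine obstacle here: the entire argument is the single application of AM--GM, and in fact the displayed computation immediately preceding the statement of Theorem~\ref{avdeg} already carries it out in full. The only point requiring a moment's care is the direction of the inequality after negation, since passing from $\overline{d}(H) \leq |E(H)|/\sqrt{|V_1||V_2|}$ to the bound on $\lambda(G)$ reverses the sign; one must verify that $-\eta$-type bounds compose correctly, which they do because $|E(H)| \geq 0$ guarantees both quantities are nonnegative. I would therefore present the proof in one or two lines, simply invoking Theorem~\ref{hndsi} and the inequality $\sqrt{|V_1||V_2|} \leq \tfrac{1}{2}(|V_1| + |V_2|)$, and noting that it holds with equality precisely when $|V_1| = |V_2|$, which will be relevant later when sharpness is discussed.
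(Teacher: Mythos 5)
Your proposal is correct and is essentially identical to the paper's own argument: the paper also derives Theorem~\ref{avdeg} by combining Theorem~\ref{hndsi} with the AM--GM inequality $\sqrt{|V_1||V_2|} \leq \tfrac{1}{2}(|V_1|+|V_2|)$, exactly as you do. (The paper additionally notes, in Remark~\ref{rmrk}, an alternative route via $\lambda_1(H)\geq \overline{d}(H)$ and eigenvalue interlacing, but its primary proof is yours.)
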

	
	\begin{Remark}\label{rmrk}
	\em Note that the statement of Theorem \ref{avdeg} can also be obtained by another argument independent of the proof of Theorem \ref{hndsi}. Indeed, first note that it is well known for every graph $T$ that $\lambda_{1}(T) \geq \overline{d}(T)$. We use the notation of Theorem \ref{hndsi}. Since $H$ is a bipartite graph, we have $\lambda(H) = -\lambda_{1}(H)$. So we get  
	\[
	\lambda(H) \leq -\overline{d}(H).
	\]
	Since $H$ is an induced subgraph of $G$, by using the interlacing theorem, it can be seen that
	\[
	\lambda(G) \leq -\overline{d}(H).
	\]
	\end{Remark}
	
	Motivated by Theorem \ref{avdeg}, we also define the following invariant for graphs.
	
	\begin{Definition}\label{avdegdef}
	Let $G$ be a graph with at least two vertices. Then we define 
	\[
	\iota(G) = \max \{\overline{d}(H): H \in \MB(G)\}.
	\]
	\end{Definition}
	
	The following corollary immediately follows from Theorem \ref{avdeg}.
	
	\begin{Corollary}\label{avdegcol}
	Let $G$ be a graph with at least two vertices. Then
	\[
	\lambda(G) \leq -\iota(G).
	\]
	\end{Corollary}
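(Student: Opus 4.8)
The plan is to observe that Corollary~\ref{avdegcol} is essentially an immediate restatement of Theorem~\ref{avdeg} once one unwinds the definition of $\iota(G)$. The only point requiring any care is to justify that the maximum defining $\iota(G)$ is actually attained, so that Theorem~\ref{avdeg} can be applied to a specific witnessing subgraph.

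First I would note that since $G$ has finitely many vertices, it has only finitely many induced subgraphs, so $\MB(G)$ is a finite set. (In fact $\MB(G)$ is nonempty, since any single edge, or indeed any two vertices, induce a bipartite subgraph; this is where the hypothesis that $G$ has at least two vertices is used.) Consequently the set $\{\overline{d}(H): H \in \MB(G)\}$ is a finite set of real numbers, and the maximum in Definition~\ref{avdegdef} is well defined and attained. Let $H_0 \in \MB(G)$ be a subgraph achieving it, so that $\overline{d}(H_0) = \iota(G)$.

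Next I would simply apply Theorem~\ref{avdeg} to the induced bipartite subgraph $H_0$, which yields
\[
\lambda(G) \leq -\overline{d}(H_0) = -\iota(G),
\]
as claimed. There is no genuine obstacle here: the entire content has already been established in Theorem~\ref{avdeg}, and the corollary merely packages that inequality in terms of the invariant $\iota(G)$ by selecting the optimal $H$. The only conceptual step is recognizing that passing from "for every $H$" to "for the best $H$" preserves the inequality in the direction stated, which is automatic because the bound $-\overline{d}(H)$ is smallest (most restrictive) precisely when $\overline{d}(H)$ is largest.
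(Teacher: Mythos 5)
Your proposal is correct and matches the paper's treatment: the paper simply states that the corollary follows immediately from Theorem~\ref{avdeg}, which is exactly your argument of applying that theorem to a subgraph $H_0$ attaining the maximum in Definition~\ref{avdegdef}. Your additional remarks (finiteness of $\MB(G)$ guaranteeing the maximum is attained, and nonemptiness of $\MB(G)$ when $G$ has at least two vertices) are sound and merely make explicit what the paper leaves implicit.
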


Recall that the \emph{maximum average degree} of a graph $G$, denoted by $\mathrm{mad}(G)$ is defined as 
\[
\mathrm{mad}(G)=\max \{\overline{d}(H): H~\text{is a nonempty subgraph of}~G\}.
\]
This invariant of a graph has been considered by several authors for various purposes, see for example \cite{MAD1} and \cite{MAD2}. It is clear that 
\[
\iota(G)\leq \mathrm{mad}(G).
\]
In contrast to $\iota(G)$, the maximum average degree of $G$ does not provide a result similar to Corollary~\ref{avdegcol}. For example, if $G$ is a complete graph with $n\geq 3$ vertices, then $\lambda(G)=-1$ while $\mathrm{mad}(G)=n-1$.     
	
\medskip	
	Applying Theorem \ref{avdeg}, we give a more explicit upper bound for $\lambda(G)$ in the next theorem. Let $\alpha(G)$ be the independence number of $G$ and $\chi(G)$ be the chromatic number of $G$. We define
	\[
	\theta(G) ~=~ \min \{n/2,~ \alpha(G) \}.
	\]
	
	\begin{Theorem}\label{tm}
		Let $G$ be a graph with at least one edge. Then
		\[
	    \lambda(G) \leq \frac{-|E(G)|}{\binom{\chi(G)}{2}  ~\theta(G)}.
	    \]
	\end{Theorem}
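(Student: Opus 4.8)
The plan is to apply Theorem~\ref{avdeg}, which asserts that $\lambda(G) \leq -\overline{d}(H)$ for \emph{any} induced bipartite subgraph $H \in \MB(G)$. Since this holds for every such $H$, it suffices to exhibit a single well-chosen induced bipartite subgraph whose average degree is at least $|E(G)| / \left(\binom{\chi(G)}{2}\,\theta(G)\right)$. The natural source of induced bipartite subgraphs is a proper coloring: fix a proper coloring of $G$ with $\chi(G)$ colors, giving color classes $C_1, \dots, C_{\chi(G)}$, each of which is an independent set. For any pair of indices $k < \ell$, the induced subgraph on $C_k \cup C_\ell$ is bipartite, so it lies in $\MB(G)$.

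The key counting step is to average over all such pairs. Every edge of $G$ joins two vertices in distinct color classes, hence lies in exactly one of the subgraphs induced on $C_k \cup C_\ell$ over all $\binom{\chi(G)}{2}$ pairs $k<\ell$. Therefore $\sum_{k<\ell} |E(G[C_k \cup C_\ell])| = |E(G)|$. By the pigeonhole principle, some pair $(k,\ell)$ yields an induced bipartite subgraph $H$ with $|E(H)| \geq |E(G)| / \binom{\chi(G)}{2}$. First I would fix this $H$ and then bound its average degree from below: $\overline{d}(H) = 2|E(H)|/|V(H)| \geq 2|E(G)| / \left(\binom{\chi(G)}{2}\,|V(H)|\right)$.

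The remaining point is to control the denominator $|V(H)| = |C_k| + |C_\ell|$, and this is where the quantity $\theta(G) = \min\{n/2, \alpha(G)\}$ enters. Each color class is an independent set, so $|C_k|, |C_\ell| \leq \alpha(G)$, giving $|V(H)| \leq 2\alpha(G)$; trivially $|V(H)| \leq n = 2 \cdot (n/2)$ as well. Combining these gives $|V(H)| \leq 2\theta(G)$, whence $\overline{d}(H) \geq 2|E(G)| / \left(\binom{\chi(G)}{2}\cdot 2\theta(G)\right) = |E(G)| / \left(\binom{\chi(G)}{2}\,\theta(G)\right)$. Feeding this into Theorem~\ref{avdeg} yields the claimed bound.

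I expect no serious obstacle; the argument is a clean double-counting over color-class pairs. The only subtlety worth checking is that the chosen pair $(k,\ell)$ genuinely gives $H$ with at least one edge (so that $H$ is a legitimate nonempty member of $\MB(G)$ and the average degree is meaningful) — this is guaranteed because $|E(G)| \geq 1$ forces the maximal pair to have $|E(H)| \geq |E(G)|/\binom{\chi(G)}{2} > 0$. One should also confirm that using the coarser bound $|V(H)| \le 2\theta(G)$ rather than the exact $|C_k|+|C_\ell|$ does not discard the sharp pairing; since Theorem~\ref{avdeg} holds for the single $H$ we selected, the uniform estimate on $|V(H)|$ is all that is needed.
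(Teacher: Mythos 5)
Your proof is correct and takes essentially the same route as the paper: produce an induced bipartite subgraph $H$ with $|E(H)| \geq |E(G)|/\binom{\chi(G)}{2}$, bound $|V(H)| \leq 2\theta(G)$ via the independence of the two sides, and conclude with Theorem~\ref{avdeg}. The only difference is that you make explicit the double-counting over color-class pairs, which the paper's proof asserts without detail (``one can see that...''); your argument is precisely the intended justification of that step.
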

	
	\begin{proof}
	    First note that one can see that $G$ has an induced bipartite subgraph $H$
	    such that
	    \begin{equation}\label{eq21}
	    |E(H)| \geq \frac{|E(G)|}{\binom{\chi(G)}{2}}.
	    \end{equation}
	    
	    Suppose $V(H) = V_1 \cup V_2$ is a bipartition of the vertices of $H$.
	    Since $V_1$ and $V_2$ are independent sets in $G$, we have $|V_1|,~|V_2| \leq \alpha(G)$, and hence
	    \[
	    |V_1| + |V_2| \leq 2\theta(G).
	    \]
	    So we have
	    \begin{equation}\label{eq22}
	    \overline{d}(H) ~=~ \frac{2|E(H)|}{|V_1| + |V_2|} \geq \frac{|E(H)|}{\theta(G)}.
	    \end{equation}
	    
	    Therefore, by (\ref{eq21}) and  (\ref{eq22}), together with Theorem \ref{avdeg}, we deduce that
	    \[
	    \lambda(G) \leq \frac{-|E(G)|}{\binom{\chi(G)}{2}  ~\theta(G)},
	    \]
	    as desired.
	\end{proof}
	
	\section{Evaluation on various classes of graphs}\label{sharp}
	
	In this section, we provide several cases where the given bounds in Section~\ref{bound} are sharp.
	
	\subsection{Cartesian Product of Graphs:}
	Let $G_1$ and $G_2$ be two graphs. Recall that the \emph{Cartesian product} of $G_1$ and $G_2$, denoted by $G_1\Box G_2$, is a graph whose vertex set is
	\[
	V(G_1\Box G_2)=\{(v,w):v\in V(G_1), w\in V(G_2)\},
	\]
	and two vertices $(v_1,w_1)$ and $(v_2,w_2)$ are adjacent in $G_1\Box G_2$ if either $v_1=v_2$ and $\{w_1,w_2\}\in E(G_2)$ or $w_1=w_2$ and $\{v_1,v_2\}\in E(G_1)$. Now, suppose that the bound provided in Corollary~\ref{avdegcol} is sharp for both $G_1$ and $G_2$. We show that it is also sharp for $G_1\Box G_2$. Roughly speaking, the set of graphs for which the aforementioned bound is sharp, is closed under the Cartesian product.   
	
	\begin{Proposition}\label{iotaproduct}
	Let $G_1$ and $G_2$ be two graphs with $\lambda(G_1) = -\iota(G_1)$ and $\lambda(G_2) = -\iota(G_2)$. Then
	\[
	\lambda(G_1 \Box G_2)=-\iota(G_1 \Box G_2).
	\]
	\end{Proposition}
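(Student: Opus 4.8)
The plan is to combine the classical spectral description of the Cartesian product with a matching product construction of bipartite subgraphs. First I would recall the well-known fact that the adjacency matrix of $G_1 \Box G_2$ is the Kronecker sum $A(G_1) \otimes I + I \otimes A(G_2)$, so that its eigenvalues are precisely the sums $\lambda_i(G_1) + \lambda_j(G_2)$. In particular the smallest eigenvalue is $\lambda(G_1 \Box G_2) = \lambda(G_1) + \lambda(G_2)$, which by hypothesis equals $-\iota(G_1) - \iota(G_2)$. Thus the whole statement reduces to showing $\iota(G_1 \Box G_2) = \iota(G_1) + \iota(G_2)$.

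One inequality is immediate: Corollary~\ref{avdegcol} applied to $G_1 \Box G_2$ gives $\lambda(G_1 \Box G_2) \leq -\iota(G_1 \Box G_2)$, and combined with the eigenvalue formula this yields $\iota(G_1 \Box G_2) \leq \iota(G_1) + \iota(G_2)$. The substance of the proof is the reverse inequality, for which I would exhibit an explicit induced bipartite subgraph of $G_1 \Box G_2$ realizing average degree $\iota(G_1) + \iota(G_2)$.

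To this end, choose $H_1 \in \MB(G_1)$ and $H_2 \in \MB(G_2)$ attaining $\overline{d}(H_1) = \iota(G_1)$ and $\overline{d}(H_2) = \iota(G_2)$, with bipartitions $V(H_1) = A_1 \cup B_1$ and $V(H_2) = A_2 \cup B_2$. The candidate is the Cartesian product $H_1 \Box H_2$, sitting inside $G_1 \Box G_2$ on the vertex set $V(H_1) \times V(H_2)$. I would check three points. First, $H_1 \Box H_2$ is an induced subgraph of $G_1 \Box G_2$: since $H_1$ and $H_2$ are induced, adjacency in $G_1 \Box G_2$ restricted to $V(H_1) \times V(H_2)$ coincides with adjacency in $H_1 \Box H_2$. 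Second, $H_1 \Box H_2$ is bipartite, the two parts being distinguished by the parity of the side of $v$ in $H_1$ added to the side of $w$ in $H_2$. Third, the average degree of a Cartesian product adds, namely $\overline{d}(H_1 \Box H_2) = \overline{d}(H_1) + \overline{d}(H_2)$, which is a one-line count from $|E(H_1 \Box H_2)| = |E(H_1)|\,|V(H_2)| + |V(H_1)|\,|E(H_2)|$ together with $|V(H_1 \Box H_2)| = |V(H_1)|\,|V(H_2)|$. Hence $H_1 \Box H_2 \in \MB(G_1 \Box G_2)$ with average degree $\iota(G_1) + \iota(G_2)$, so $\iota(G_1 \Box G_2) \geq \iota(G_1) + \iota(G_2)$.

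Combining the two inequalities gives $\iota(G_1 \Box G_2) = \iota(G_1) + \iota(G_2) = -\lambda(G_1 \Box G_2)$, which is exactly the claim. The only real care needed is the bookkeeping in the product construction, verifying that $H_1 \Box H_2$ is genuinely induced and correctly bipartitioned, but both follow directly from the induced hypotheses on $H_1$ and $H_2$. I expect no serious obstacle, since the upper bound is free from Corollary~\ref{avdegcol} and the lower bound is witnessed by a single transparent construction.
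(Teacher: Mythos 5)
Your proposal is correct and follows essentially the same route as the paper: the eigenvalue identity $\lambda(G_1 \Box G_2) = \lambda(G_1) + \lambda(G_2)$, the construction of $H_1 \Box H_2$ as an induced bipartite subgraph witnessing average degree $\iota(G_1) + \iota(G_2)$, and the sandwich via Corollary~\ref{avdegcol}. The only difference is presentational—you phrase the conclusion as the equality $\iota(G_1 \Box G_2) = \iota(G_1) + \iota(G_2)$ and spell out the inducedness and bipartition checks that the paper leaves implicit—but the argument is the same.
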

	\begin{proof}
	Let $H_1$ and $H_2$ be the induced bipartite subgraphs of $G_1$ and $G_2$ corresponding to $\iota(G_1)$ and $\iota(G_2)$, respectively. It is well-known that 
	\[
	\lambda(G_1 \Box G_2) = \lambda(G_1) + \lambda(G_2).
	\]
	Since $\lambda(G_1) = -\iota(G_1)$ and $\lambda(G_2) = -\iota(G_2)$, we have
	\begin{equation}\label{eq30}
	\lambda(G_1 \Box G_2) = -(\iota(G_1) + \iota(G_2)).
	\end{equation}
	
	Since $H_1$ and $H_2$ are induced bipartite subgraphs of $G_1$ and $G_2$, one can see that $H_1 \Box H_2$ is also an induced bipartite subgraph of $G_1 \Box G_2$ with 	
	\begin{equation}\label{eq31}
	|E(H_1 \Box H_2)| = |E(H_1)||V(H_2)| + |E(H_2)||V(H_1)|
	\nonumber
	\end{equation}
	and
	\begin{equation}\label{eq32}
	|V(H_1 \Box H_2)| = |V(H_1)| |V(H_2)|.
	\nonumber
	\end{equation}
	 Therefore, we have
	 
	\begin{eqnarray*}
	\overline{d}(H_1 \Box H_2)
	&=& 
	\frac{2(|E(H_1)||V(H_2)| + |E(H_2)||V(H_1)|)}{|V(H_1)| |V(H_2)|}
	\nonumber\\
	&=& \frac{2|E(H_1)|}{|V(H_1)|} + \frac{2|E(H_2)|}{|V(H_2)|}
	= \iota(G_1) + \iota(G_2).
	\end{eqnarray*}
	Thus, by (\ref{eq30}) we get
	\[
	\lambda(G_1 \Box G_2) = -\overline{d}(H_1 \Box H_2).
	\]
	Since $\lambda(G_1 \Box G_2) \leq -\iota(G_1 \Box G_2)$ by Corollary~\ref{avdegcol}, and since $\iota(G_1 \Box G_2) \geq \overline{d}(H_1 \Box H_2)$, we deduce that
	\[
	\lambda(G_1 \Box G_2) ~=~ -\iota(G_1 \Box G_2),
	\]
	as desired.
	\end{proof}
	
	\subsection{Regular bipartite graphs:}\label{dbip}
	Let $d \geq 1$ and let $G$ be a $d$-regular bipartite graph with $2t$ vertices. It is well-known that $\lambda(G)=-d$. On the other hand, it is a well-known consequence of Hall's ~Marriage ~Theorem that any regular bipartite graph has a perfect matching. Therefore, it is easy to see that $\alpha(G) = t$, and hence $\theta(G)=t$. Since $\chi(H) = 2$, the given bound in Theorem~\ref{tm} turns to be 
	\[
	\frac{-|E(G)|}{\binom{\chi(G)}{2}  ~\theta(G)}=\frac{-t d}{t}=-d.
	\]
	Thus, for any $d$-regular bipartite graphs this bound is sharp.
	
	%We prove that in this case our upper bound is sharp with the help of the following lemma.
	
	%\begin{Lemma}\label{lms}
	%Let $H$ be a $d$-regular bipartite graph on $2n$ vertices, then $\alpha(H) = n$.
	%\end{Lemma}
	%\begin{proof}
	%It is easy two see that any bipartation of $H$ will have two parts of $n$ vertices. Let name these parts $H_1$ and $H_2$. The lemma will be proved if we show that for any independent set of $H$ there is an independent set with the same size that only consist of vertices in $H_1$. Consider any independent set of $H$, it is well known according to $Hall's ~Marriage ~Theorem$ that $H$ has a perfect matching. Replace any vertex in the considered independent set which is in $H_2$ with its match in $H_1$. The resulting set would be an independent set with the exact same size as the initial set, only containing vertices of $H_1$.
	%\end{proof}
	
	%With this lemma being proved, and knowing that $\chi(H) = 2$, we just substitute the values in the bound,

	\subsection{Complete multipartite graphs with equal parts:}\label{ckpg}
	
	Let $G$ be a complete $k$-partite graph whose each part consists of $t$ vertices, where $k\geq 2$ and $t\geq 1$. Then $\lambda(G)=-t$. For the sake of convenience of the reader, we briefly argue this fact. If $k=2$, then $G$ is a complete bipartite graph and it is a well-known fact that $\lambda(G)=-t$. If $k\geq 3$, then by considering $G$ as the join of a complete $(k-1)$-partite graph with $t$ vertices in each part and the complement of a complete graph over $t$ vertices, one can use the formula given in~\cite[Theorem~2.1.8]{CRS}. Then, by using induction on $k$, it follows that $\lambda(G)=-t$.   
	
	%\begin{Lemma}\label{lmttt}
	%Let $H$ being defined as above, then $\lambda(H) = -n$.
	%\end{Lemma}

	%To observe the correctness of Lemma \ref{lmttt} you may consider an equitable partition of $H$ with each part of $H$ as a part of the equitable partition. Matrix of this equitable partition would be $n$ times the adjacency matrix of the complete graph with $k$ vertices.
	
	 On the other hand, it is obvious that $\theta(G)=\alpha(G) = t$, $\chi(G) = k$ and $|E(G)|={k \choose 2} t^2$. Therefore, the upper bound given in Theorem~\ref{tm} turns to be 
	 \[
	 \frac{-|E(G)|}{\binom{\chi(G)}{2}  ~\theta(G)}=-t,
	 \] 	
%	\[
%	\frac{-m}{\binom{\chi(G)}{2}  ~\theta(G)} ~=~ \frac{-\frac{1}{2} k n(k - 1)n}{\binom{\chi(G)}{2} ~n} ~=~ -n
%	\]	
	which shows that this bound is sharp for any multipartite graph with equal parts.
	
\subsection{A family of graphs with nonsharp bounds}
 
 We close this section with the following example which provides a family of dense graphs for which the bounds given in Corollary~\ref{hndsicor} and Corollary~\ref{avdegcol} are not sharp. Indeed, the difference between 
 $- \eta(G)$ and $\lambda(G)$ can grow to $O(|V(G)|)$ in general.

	%Although being sharp in some sparse and dense graphs as the examples provided above, there are dense graphs in which neither the bound presented in Corollary \ref{hndsicor} nor the bound presented in Corollary \ref{avdegcol} works well. 

	\begin{Example}\label{badexmpl}
		{\em Given a positive integer $s$, let $H_s=2K_s*2K_s$ be the join of two copies of the disjoint union of two complete graphs~$K_s$. More precisely, $H_s$ is the graph with the vertex set $V(H_s) = V_1 \cup V_2 \cup V_3 \cup V_4$ and the edge set  
		\[
	     E(H_s)=\{ \{u, v\} \mid u \neq v, u,v \in V_i, i \in \{1, 2, 3, 4\}\} \cup \{ \{u, v\} \mid u \in V_1 \cup V_2, v \in V_3 \cup V_4\},
		\]
		where $V_1 = \{1, 2, \dots, s\}$, $V_2 = \{s + 1, s + 2, \dots, 2s\}$, $V_3 = \{2s + 1, 2s + 2, \dots, 3s\}$ and $V_4 = \{3s + 1, 3s + 2, \dots, 4s\}$.  Then, we show that 
		\[
		|-\eta(H_s) - \lambda(H_s)| \in O(|V(H_s)|).
		\]
		It is easy to see that every induced bipartite subgraph of $H_s$ has at most four vertices, and hence $\eta(H_s) = 2$. 
		%Also, $\lambda(H)$ can be computed by the use of the equitable partitions. 
		On the other hand, the sets $V_i$'s for $i=1, 2, 3, 4$, provide an equitable partition for $H_s$ with the following divisor matrix 
		\[
			E =\begin{bmatrix}
			s-1 & 0 & s & s \\
			0 & s-1 & s & s \\
			s & s & s-1 & 0 \\
			s & s & 0 & s-1
			\end{bmatrix}
			= (s-1)I + s\begin{bmatrix}
			0 & 0 & 1 & 1 \\
			0 & 0 & 1 & 1 \\
			1 & 1 & 0 & 0 \\
			1 & 1 & 0 & 0
			\end{bmatrix}.
		\]
		One can see that the least eigenvalue of $E$ is equal to $-(s+1)$. Hence, we have 
		\[
		\lambda(H_s) \leq -(s+1) = -(\frac{|V(H_s)|}{4} + 1),
		\] 
		see for example~\cite[Theorem~3.9.5]{CRS}. This implies that 
		$|-\eta(H_s) - \lambda(H_s)| \in O(|V(H_s)|)$, as desired. 
		
		We would like to remark on an alternative tool for the above discussion on $\lambda(H_s)$. Indeed, similar to subsection~\ref{ckpg}, applying the formula given in \cite[Theorem~2.1.8]{CRS} yields the precise value of $\lambda(H_s)$. More precisely, it can be seen that in fact we have $\lambda(H_s) = -(\frac{|V(H_s)|}{4} + 1)$.}
	\end{Example}
	
		\section{A comparison with Nikiforov's bound}\label{cmp}
	
	In this section, we compare the bound provided in Theorem~\ref{tm} with the one given in the following theorem by Nikiforov in~\cite{Nikiforov} in several cases. 
	
	\begin{Theorem}\label{Nik}
		\cite[Theorem~1]{Nikiforov}
		Let $G$ be a $K_{r+1}$-free graph on $n$ vertices and $m$ edges where $r\geq 2$. Then
		\[
		\lambda(G) < - \frac{2^{r+1} m^r}{r n^{2r-1}}.
		\]
	\end{Theorem}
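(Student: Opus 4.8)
The plan is to bound $-\lambda(G)$ from below by playing the largest eigenvalue against the smallest through the power sums (spectral moments) $\operatorname{tr}(A^{k})=\sum_{i=1}^{n}\lambda_i(G)^{k}$, and to feed in the $K_{r+1}$-freeness only at the step that controls the relevant moment. The two tools I would rely on throughout are the elementary bound $\lambda_1(G)\ge\overline{d}(G)=2m/n$ (obtained by using the all-ones vector as a test vector in the Rayleigh quotient for $\lambda_1(G)$), which guarantees a large positive eigenvalue, and the identity $\operatorname{tr}(A^{k})=\#\{\text{closed walks of length }k\}$, which for small odd $k$ records clique information.

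First I would settle the model case $r=2$ (triangle-free graphs), where the argument is clean and already shows the shape of the bound. Since $G$ is triangle-free, $\operatorname{tr}(A^{3})=6\cdot\#\triangle=0$, so $\sum_{\lambda_i>0}\lambda_i^{3}=\sum_{\lambda_i<0}|\lambda_i|^{3}$. The left-hand side is at least $\lambda_1(G)^{3}\ge(2m/n)^{3}$, while the right-hand side is at most $(-\lambda(G))\sum_{\lambda_i<0}|\lambda_i|^{2}\le(-\lambda(G))(2m-\lambda_1(G)^{2})<(-\lambda(G))\cdot 2m$, because every negative eigenvalue has absolute value at most $-\lambda(G)$, because $\sum_i\lambda_i^{2}=2m$, and because $\lambda_1(G)>0$ once $G$ has an edge (this last point supplies the strict inequality for free). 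Combining, $(-\lambda(G))\cdot 2m>8m^{3}/n^{3}$, that is $-\lambda(G)>4m^{2}/n^{3}$, which is exactly the $r=2$ instance of $\frac{2^{r+1}m^{r}}{r\,n^{2r-1}}$.

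For general $r$ I would attempt the same positive-versus-negative comparison with a higher odd moment $\operatorname{tr}(A^{2k-1})$, using $\sum_{\lambda_i<0}|\lambda_i|^{2k-1}\le(-\lambda(G))^{2k-3}\cdot 2m$ on the negative side and seeking a matching lower bound from $\lambda_1(G)\ge 2m/n$ on the positive side. The hard part — and the real content of the theorem — is the transfer step. For $r\ge 3$ no higher odd moment vanishes; indeed $\operatorname{tr}(A^{2k-1})\ge 0$ always, and for clique-rich graphs the positive eigenvalues dominate these odd moments, so the automatic identity available for triangles is gone. Closing this gap requires reinjecting the $K_{r+1}$-free hypothesis through a Kruskal--Katona / Motzkin--Straus type estimate on the clique and closed-walk counts of $G$ (or, alternatively, through the spectral Tur\'an inequality $\lambda_1(G)\le\sqrt{2m(1-1/r)}$), and then balancing these estimates so that the negative eigenvalues are forced to carry mass proportional to $(2m/n)^{2k-1}$. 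I expect the genuine difficulty to be quantitative rather than conceptual: getting the correct order of magnitude is plausible along these lines, but pinning down the exact constant $2^{r+1}/r$ is delicate. A useful consistency check that any general argument must respect is the bipartite case, where $-\lambda(G)=\lambda_1(G)\ge 2m/n$ already yields the bound directly, even though every vertex neighbourhood is edgeless and so carries no clique information at all.
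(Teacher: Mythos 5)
You should note first that the paper does not prove this statement at all: it is imported verbatim from the literature with the citation \cite[Theorem~1]{Nikiforov}, so there is no internal proof to compare against, and your attempt must be judged as a free-standing proof of Nikiforov's theorem. Judged that way, your $r=2$ case is correct and complete: triangle-freeness gives $\operatorname{tr}(A^3)=0$, hence
$(2m/n)^3\le\lambda_1(G)^3\le\sum_{\lambda_i>0}\lambda_i^3=\sum_{\lambda_i<0}|\lambda_i|^3\le(-\lambda(G))\sum_{\lambda_i<0}\lambda_i^2\le(-\lambda(G))\bigl(2m-\lambda_1(G)^2\bigr)<(-\lambda(G))\cdot 2m$,
where the last steps use $\sum_i\lambda_i^2=2m$ together with $\lambda_1(G)>0$ and $-\lambda(G)>0$, both valid once $m\ge 1$ (a hypothesis the theorem silently needs, since it fails for edgeless graphs, and which you correctly supplied). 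This yields $-\lambda(G)>4m^2/n^3$, which is exactly $2^{r+1}m^r/(rn^{2r-1})$ at $r=2$, with the required strict inequality.

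For $r\ge 3$, however, what you have written is a plan rather than a proof, and you concede as much yourself. The obstruction you identify is genuine and it defeats this particular method: $K_{r+1}$-freeness for $r\ge 3$ does not force any odd moment $\operatorname{tr}(A^{2k-1})$ to vanish, so the exact identity powering the $r=2$ case has no analogue. What you would need is a lower bound on the surplus $\sum_{\lambda_i>0}\lambda_i^{2k-1}-\operatorname{tr}(A^{2k-1})$, and that surplus is precisely the quantity about which the tools you name give no control; invoking Kruskal--Katona, Motzkin--Straus, or the spectral Tur\'an bound $\lambda_1(G)\le\sqrt{2m(1-1/r)}$ as possible patches, without combining them into an inequality that actually produces the constant $2^{r+1}/r$, leaves the whole content of the theorem unproved --- the case of general $r$ \emph{is} the theorem. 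For what it is worth, Nikiforov's own argument takes a different route, one much closer in spirit to the present paper: it extracts dense bipartite structure (disjoint independent sets with many edges between them) from a $K_{r+1}$-free graph, exploiting the fact that vertex neighborhoods induce $K_r$-free subgraphs, and converts that density into a negative Rayleigh quotient, essentially the mechanism of Theorem~\ref{hndsi} and Remark~\ref{rmrk}. If you want to complete your write-up, that is the route to pursue; the moment method appears to stop at $r=2$.
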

	
	The next theorem provides certain families of graphs for which our bound in Theorem~\ref{tm} is sharper than the one given in Theorem~\ref{Nik}. 
	
	\begin{Theorem}\label{comparison}
		Let $G$ be a graph with $n$ vertices and $m\geq 1$ edges. Suppose that one of the following conditions holds: 
		\begin{enumerate}
			\item[\em(a)] $\chi(G)=2$ or $3$;
			\item[\em(b)] $G$ is a planar graph.
		\end{enumerate}
		Then 
		\[
		-\frac{m}{\binom{\chi(G)}{2}  ~\theta(G)} \leq -\frac{2^{r+1} m^r}{r n^{2r-1}}. 
		\]
	\end{Theorem}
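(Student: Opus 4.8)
The two sides of the claimed inequality are, by Theorem~\ref{tm} and Theorem~\ref{Nik}, both upper bounds for $\lambda(G)$, and the assertion has the shape $-A\le -B$, that is $A\ge B$, with $A=\frac{m}{\binom{\chi(G)}{2}\theta(G)}$ and $B=\frac{2^{r+1}m^{r}}{rn^{2r-1}}$. Since $m\ge 1$ forces $\chi(G)\ge 2$, and since $\omega(G)\le\chi(G)$ shows that $G$ is $K_{\chi(G)+1}$-free, I would apply Theorem~\ref{Nik} with the choice $r=\chi(G)$; this respects its hypothesis $r\ge 2$ and matches the parameter already appearing in $A$. Writing $\chi=\chi(G)$ and $\theta=\theta(G)$, dividing by the positive quantity $m$ and clearing denominators turns $A\ge B$ into the single inequality
\[
r\,n^{2r-1}\ \ge\ 2^{r+1}\binom{\chi}{2}\,\theta\, m^{r-1}.
\]
The whole argument then reduces to bounding the right-hand side from above, and throughout I would use the universal estimate $\theta\le n/2$, immediate from $\theta=\min\{n/2,\alpha(G)\}$.

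For part~(a), with $r=\chi\in\{2,3\}$, I would bound the edges by Tur\'an's theorem: as $G$ is $K_{r+1}$-free, $m\le(1-1/r)n^{2}/2=(r-1)n^{2}/(2r)$. Substituting this, together with $\theta\le n/2$ and $\binom{\chi}{2}=r(r-1)/2$, into the displayed inequality, every power of $n$ collapses to $n^{2r-1}$ and the statement reduces, after cancellation, to the purely numerical inequality $r^{\,r-1}\ge(r-1)^{r}$. This holds for $r=2$ (as $2\ge1$) and for $r=3$ (as $9\ge 8$), which settles case~(a). The same reduction yields $4^{3}=64<81=3^{4}$ at $r=4$, so the Tur\'an bound alone is insufficient once $\chi=4$; this is exactly why the planar hypothesis is singled out as a separate case.

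For part~(b) the Four Colour Theorem gives $\chi\le 4$, so if $\chi\le 3$ the graph is already covered by part~(a) and the only genuinely new situation is $\chi=4$. There I would again take $r=\chi=4$ but replace the Tur\'an estimate by the far stronger planar bound $m\le 3n-6\le 3n$. Inserting $m^{r-1}\le(3n)^{r-1}$ and $\theta\le n/2$ reduces the displayed inequality to one of the form $n^{r-1}\ge C_{r}$ with an explicit constant (here $C_{4}=2^{4}\binom{4}{2}3^{3}/4=648$), which holds for every $n\ge 9$. I expect the main obstacle to be the finitely many planar graphs with $\chi=4$ and $4\le n\le 8$: for these the crude bound $\theta\le n/2$ is too lossy and must be sharpened to $\theta\le\alpha(G)$, using that a chromatic number of $4$ on so few vertices forces $\alpha(G)$ to be small (for instance $n=4$ forces $G=K_{4}$ with $\theta=1$, and $n=5$ forces $\alpha(G)=2$). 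A short direct check of this finite list, combined with the estimate for $n\ge 9$, then completes the planar case and hence the theorem.
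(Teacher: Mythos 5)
Your proposal is correct and takes essentially the same route as the paper: the same clearing of denominators to the inequality $n^{2r-1}\ge 2^{r}(r-1)\,\theta(G)\,m^{r-1}$, the same Tur\'an-plus-$\theta(G)\le n/2$ reduction of case (a) to the numerical inequality $r^{r-1}\ge (r-1)^{r}$ for $r=2,3$, and the same treatment of case (b) via the Four Colour Theorem, the planar edge bound, and a finite check of small $n$ in which $\theta(G)$ must be sharpened to $\alpha(G)$. The only cosmetic deviations are your explicit choice $r=\chi(G)$ throughout (the paper's proof of (b) instead takes $r=4$ for every planar graph, rather than routing planar graphs with $\chi(G)\le 3$ through part (a)) and your use of $m\le 3n$ in place of the paper's $m\le 3n-6$, which merely enlarges the finite verification from $n\in\{4,5\}$ to $4\le n\le 8$; all of those checks do go through, so the deferred case analysis is as routine as you expect.
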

	
	\begin{proof}
		Suppose that $G$ is $r$-colorable. Then it is clear that $G$ is $K_{r+1}$-free and we have 
		\[
		\frac{m}{\binom{\chi(G)}{2}  ~\theta(G)} \geq \frac{2m}{r(r - 1)\theta(G)},
		\]	
		since $2\leq \chi(G)\leq r$. Thus, if each of (a) or (b) holds, then we need to show that 
		\[
		\frac{2m}{r(r - 1)\theta(G)} \geq \frac{2^{r+1} m^r}{r n^{2r-1}}
		\]
		or equivalently, 
		%	\[
		%	\frac{1}{(r - 1) ~\theta(G)} \geq \frac{2^{r} m^{r-1}}{n^{2r-1}}.
		%	\]
		\begin{equation} \label{eq41}
		n^{2r-1} \geq (r-1)~\theta(G)2^{r} m^{r-1}.
		\end{equation}
		
		(a) By the well-known Tur\'an's theorem, we have $m\leq (1-\frac{1}{r})\frac{n^2}{2}$, see for example~\cite{Turan}. Therefore, by (\ref{eq41}), it suffices to show that 
		\[
		n^{2r-1} \geq (r-1)~\theta(G)2^{r} \big(\frac{(r-1)n^2}{2r}\big)^{r-1}
		\]
		or equivalently, 
		\[
		n \geq~ 2 \theta(G) ~\frac{(r-1)^r}{r^{r-1}}
		\]
		for $r=2,3$. By the definition of $\theta(G)$, it is easy to see that this is the case.  
		
		(b) Suppose that $G$ is planar. Then it is $4$-colorable, by the well-known $4$-color theorem. We have $n\geq 2$. If $n=4$ or $5$, then one can easily check that (\ref{eq41}) holds. Now, suppose that $n\neq 4,5$. Since $G$ is planar, it is also known that $m \leq 3n-6$. Therefore, to prove (\ref{eq41}), it is enough to verify the following inequality     
		\[
		n^6 \geq 24(3n-6)^3,
		\]
		since $\theta(G) \leq n/2$. One can easily see that the latter inequality holds for all desired $n\neq 4,5$. 
		%For $n = 4$ and $n = 5$ we can check that \ref{eq41} holds for all graphs with n vertices.
	\end{proof}
	
	We would like to end this section with an asymptotic comparison. It is clear that any $r$-colorable graph is $K_{r+1}$-free, but it is well-known that if $n$ is large enough, then the number of $K_{r+1}$-free graphs on $\{1,\ldots,n\}$ is strictly bigger than the number of  $r$-colorable graphs on $\{1,\ldots,n\}$. Kolaitis et al. in \cite{allmost} studied $K_{r+1}$-free graphs from an asymptotic point of view. More precisely,
	
	\begin{Theorem}\label{asymptotic}
		\cite[Theorem~1]{allmost}	
		Let $S_{n}(r)$ be the number of $K_{r+1}$-free graphs on $\{1,\ldots,n\}$, and let $L_n(r)$ be the number of labeled $r$-colorable graphs on $\{1,\ldots,n\}$. Then     
		\[
		\lim_{n \to \infty} \big(\frac{S_{n}(r)}{L_{n}(r)}\big)=1.
		\]
	\end{Theorem}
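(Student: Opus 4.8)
The plan is to prove the two inequalities separately. The lower bound $S_n(r)\ge L_n(r)$ is immediate: every labeled $r$-colorable graph is $K_{r+1}$-free, because the $r+1$ mutually adjacent vertices of a copy of $K_{r+1}$ would require $r+1$ distinct colors. Hence $S_n(r)/L_n(r)\ge 1$ for every $n$, and it remains to establish $S_n(r)\le (1+o(1))L_n(r)$; equivalently, that the number of $K_{r+1}$-free graphs on $\{1,\dots,n\}$ that fail to be $r$-colorable is $o(L_n(r))$. In other words, I would reduce the claim to the structural assertion that \emph{almost every} $K_{r+1}$-free graph is $r$-partite.

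For the structure, I would first apply Szemer\'edi's regularity lemma to a $K_{r+1}$-free graph $G$ and pass to its cluster graph on the dense $\epsilon$-regular pairs. The embedding (counting) lemma forces this cluster graph to be $K_{r+1}$-free as well, so by Tur\'an's theorem \cite{Turan} it has at most $(1-1/r)\binom{k}{2}$ edges and, by the Erd\H{o}s--Simonovits stability theorem, a dense $G$ must admit an $r$-partition $V(G)=V_1\cup\cdots\cup V_r$ that is \emph{almost} proper, i.e.\ with a negligible number of edges inside the parts. Discarding a vanishing fraction of graphs, I may therefore assume that every $K_{r+1}$-free graph under consideration carries such a near-proper $r$-partition.

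The gain should then come from a supersaturation/entropy count on the remaining ``defect'' edges. A graph that is \emph{not} $r$-colorable must, with respect to any $r$-partition, retain at least one edge $\{u,v\}$ inside some part $V_i$. Such an edge, together with a transversal choosing one vertex from each of the other parts, would close up a copy of $K_{r+1}$; $K_{r+1}$-freeness therefore forbids $u$ and $v$ from sharing a ``rainbow'' common neighborhood across $V_1,\dots,\widehat{V_i},\dots,V_r$, which excludes a constant proportion of the admissible cross-edge patterns and costs a factor $2^{-\Omega(n)}$ per internal edge. (For $r=2$ this is exactly the Erd\H{o}s--Kleitman--Rothschild observation that an edge inside one side of a near-bipartition forces its endpoints to have disjoint neighborhoods on the other side, a $(3/4)^{n/2}$ penalty.) Balanced against the trivial lower bound $L_n(r)\ge 2^{(1-1/r)\binom{n}{2}-O(n)}$ coming from a single balanced partition, this penalty should push the non-$r$-colorable family below $o(L_n(r))$ and yield $S_n(r)=L_n(r)+o(L_n(r))$, as required.

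The hard part is the bookkeeping in this last step, and it is precisely what separates the present claim from the much easier fact that $\log_2 S_n(r)=(1-1/r)\binom{n}{2}+o(n^2)$: a ratio tending to $1$ demands that the additive error in $\log_2$ vanish entirely, well below the $o(n^2)$ slack that regularity (or the hypergraph container method) produces for free. To make the $2^{-\Omega(n)}$ per-edge saving decisive, I would need to sharpen ``few internal edges'' enough to confine the relevant $r$-partitions to a set of size $2^{O(n)}$, so that the number of candidate partitions does not swamp the saving, and simultaneously control the near-uniqueness of the optimal $r$-coloring so as not to miscount $L_n(r)$ itself. This delicate count is the crux of the argument of Kolaitis, Pr\"omel and Rothschild \cite{allmost}, and I expect it to be the principal technical obstacle.
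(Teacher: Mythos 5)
This statement is not proved in the paper at all: it is quoted verbatim, with attribution, as Theorem~1 of Kolaitis--Pr\"omel--Rothschild \cite{allmost}, and the paper's ``proof'' consists of that citation. So the relevant question is whether your blind attempt actually constitutes a proof of the cited result, and it does not. The first half of your proposal is fine: $S_n(r)\ge L_n(r)$ is immediate since $r$-colorable graphs are $K_{r+1}$-free, and the theorem is correctly reduced to showing that the number of $K_{r+1}$-free graphs that are not $r$-colorable is $o(L_n(r))$. But that reduction is the easy part; everything after it in your write-up is a heuristic roadmap whose decisive step you yourself defer as ``the principal technical obstacle.'' Concretely: the regularity/stability machinery you invoke controls the count of $K_{r+1}$-free graphs only up to a factor of $2^{o(n^2)}$, which (as you correctly note) is hopelessly coarse for a ratio tending to $1$; to close the argument you would need (i) to show that all but an exponentially small fraction of $K_{r+1}$-free graphs admit an $r$-partition with \emph{zero} internal edges, (ii) to confine the set of candidate partitions to something like $2^{O(n)}$ so that the union bound over partitions does not swallow the claimed $2^{-\Omega(n)}$ per-edge penalty, and (iii) to verify that an internal edge really does force an exponential loss (your transversal argument only forbids $u,v$ from having a common neighbor in \emph{every} other part simultaneously, and quantifying the resulting entropy loss against the freedom in the cross edges is exactly the delicate count). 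None of (i)--(iii) is carried out, so the proposal has a genuine gap at the only hard point of the theorem.

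A secondary remark on attribution of method: even as a roadmap, your route (Szemer\'edi regularity, embedding lemma, Erd\H{o}s--Simonovits stability, containers) is the \emph{modern} approach to such asymptotic-structure results, whereas the original argument of \cite{allmost} generalizes the direct Erd\H{o}s--Kleitman--Rothschild counting scheme for triangle-free graphs and does not pass through the regularity lemma. That distinction would matter if the sketch were complete, since the two routes organize the bookkeeping quite differently; as it stands, however, the missing quantitative count is the same under either organization, and it is precisely the content of the theorem being cited.
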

	
	Combining Theorem~\ref{comparison}, part~(a) and Theorem~\ref{asymptotic}, we immediately get the following corollary.
	
	\begin{Corollary}\label{almost all}
		Let $t=3,4$. Then, for almost all $K_t$-free graphs, the inequality given in Theorem~\ref{comparison} holds. 
	\end{Corollary}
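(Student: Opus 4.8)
The plan is to reduce the corollary to Theorem~\ref{comparison}(a) by checking that almost every $K_t$-free graph meets its chromatic hypothesis. Throughout I put $r=t-1$, so that $t=3$ corresponds to $r=2$ and $t=4$ to $r=3$, and ``$K_t$-free'' is the same as ``$K_{r+1}$-free''; with this choice the right-hand side appearing in Theorem~\ref{comparison} is exactly Nikiforov's quantity from Theorem~\ref{Nik}. Recall that ``almost all $K_t$-free graphs satisfy $P$'' means that the proportion, among the $S_n(r)$ labeled $K_{r+1}$-free graphs on $\{1,\dots,n\}$, of those enjoying $P$ tends to $1$ as $n\to\infty$.

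The first step is to read Theorem~\ref{asymptotic} as a density statement. Every $r$-colorable graph is $K_{r+1}$-free, so $L_n(r)\le S_n(r)$ and the difference $S_n(r)-L_n(r)$ counts precisely the $K_{r+1}$-free graphs on $\{1,\dots,n\}$ that are \emph{not} $r$-colorable. Dividing by $S_n(r)$, the limit $S_n(r)/L_n(r)\to 1$ is equivalent to
\[
\frac{S_n(r)-L_n(r)}{S_n(r)}\longrightarrow 0,
\]
that is, the fraction of $K_{r+1}$-free graphs which fail to be $r$-colorable vanishes. Hence almost all $K_t$-free graphs are $(t-1)$-colorable and in particular satisfy $\chi(G)\le t-1$.

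Next I would discard the single edgeless graph on $\{1,\dots,n\}$: since $S_n(r)\to\infty$, the graphs with at least one edge still form a fraction tending to $1$, so almost all $K_t$-free graphs have $m\ge 1$ and therefore $\chi(G)\ge 2$. Combining the two density conclusions, almost all $K_t$-free graphs satisfy $2\le\chi(G)\le t-1$; for $t=3$ this means $\chi(G)=2$, and for $t=4$ it means $\chi(G)\in\{2,3\}$. In either case the hypotheses of Theorem~\ref{comparison}(a) are fulfilled (with $m\ge 1$), so for every graph $G$ in this density-$1$ family the inequality
\[
-\frac{m}{\binom{\chi(G)}{2}\,\theta(G)}\le -\frac{2^{r+1}m^r}{r\,n^{2r-1}}
\]
holds. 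As this family has asymptotic density $1$ among all $K_t$-free graphs, the corollary follows.

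I do not anticipate a genuine obstacle; the argument is essentially an unpacking of the two cited theorems. The only points deserving care are the elementary bookkeeping that converts the multiplicative limit $S_n(r)/L_n(r)\to 1$ into the additive density statement ``$\chi(G)\le t-1$ for almost all graphs,'' and making sure the choice $r=t-1$ is consistent with the $r$ occurring on the right-hand side of Theorem~\ref{comparison}, so that the inequality supplied by Theorem~\ref{comparison}(a) is literally the inequality asserted in the corollary.
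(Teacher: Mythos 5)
Your proposal is correct and follows exactly the paper's route: the paper likewise obtains the corollary by combining Theorem~\ref{asymptotic} (almost all $K_{r+1}$-free graphs are $r$-colorable, with $r=t-1$) with Theorem~\ref{comparison}(a). Your extra bookkeeping—converting $S_n(r)/L_n(r)\to 1$ into a density statement and discarding the single edgeless graph to guarantee $m\ge 1$ and $\chi(G)\ge 2$—just makes explicit what the paper treats as immediate.
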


	\section*{Acknowledgment}
	The second author was in part supported by a grant from IPM (No. 1403130020).


\begin{thebibliography}{99}
		

	
	
	\bibitem{Alon} N. Alon, B. Sudakov, {\em Bipartite subgraphs and the smallest eigenvalue}, Combinatorics Probability and Computing, Vol. 9, (2000), 1--12.
	
	\bibitem{Min1} F. K. Bell, D. Cvetkovi\'c, P. Rowlinson, S. K. Simi\'c, {\em Graphs for which the least eigenvalue is minimal, I}, Linear Algebra and its Applications, Vol. 429, (2008), 234-241.
	
	\bibitem{Min2} F. K. Bell, D. Cvetkovi\'c, P. Rowlinson, S. K. Simi\'c, {\em Graphs for which the least eigenvalue is minimal, II}, Linear Algebra and its Applications, Vol. 429, (2008), 2168-2179.
	
	\bibitem{MAD1} M. Bonamy, B. L\'ev$\hat{\mathrm{e}}$que, A. Pinlou, {\em Graphs with maximum degree $\Delta \geq 17$ and maximum average degree less than $3$ are list $2$-distance $(\Delta +2)$-colorable}, Discrete Math., Vol. 306 (317), (2014), 19-32. 

        \bibitem{Brouwer} A. E. Brouwer, S. M. Cioaba, F. Ihringer, M. McGinnis, {\em The smallest eigenvalues of Hamming graphs, Johnson graphs and other distance-regular graphs with classical parameters}, J. Combin. Theory Ser. B 133 (2018) 88-121.

        \bibitem{Brouwer2} A. E. Brouwer, W. H. Haemers, {\em Spectra of Graphs},  Springer, New York, (2012).

        \bibitem{Cioaba} S. M. Cioaba, {\em The spectral radius and maximum degree of irregular graphs}, Electron. J. Combin. 14 (2007).

        \bibitem{Cioaba2} S. M. Cioaba, R. J. Elzinga, D. A. Gregory, {\em Some observations on the smallest adjacency eigenvalue of a graph}, Discuss. Math. Graph Theory 40 (2020), 467-493.
	
	\bibitem{CRS} D. Cvetkovi\'c, P. Rowlinson, S. Simi\'c, {\em An introduction to the theory of graph spectra}, London Mathematical Society, Student Text 75, Cambridge University Press, (2010).

        \bibitem{Goemans} M. X. Goemans, D. P. Williamson, {\em Improved approximation algorithms for maximum cut and satisfiability problems using semidefinite programming}, J. ACM 42 (1995) 1115-1145.
	
	\bibitem{allmost}  P. G. Kolaitis, H. J. Promel, B. L. Rothschild, {\em $K_{l+1}$-free graphs: asymptotic structure and a 0-1 law}, Trans. Amer. Math. Soc., Vol. 303, (1987), 637-671.
	
	\bibitem{Kwan} M. Kwan, S. Letzter, B. Sudakov, T. Tran, {\em Dense induced bipartite subgraphs in triangle-free graphs}, Combinatorica, Vol. 40(2), (2020), 283-305. 
	
	\bibitem{MAD2} W. Nadara, M. Smulewicz, {\em Decreasing the maximum average degree by deleting an independent set or a d-degenerate subgraph}, Electron. J. Comb., 29(1) (2022), \#P1.49. 
	
	\bibitem{Nikiforov} V. Nikiforov, {\em The smallest eigenvalue of $K_r$-free graphs}, Discrete Math., Vol. 306 (2), (2006), 612-616. 

        \bibitem{Nikiforov2} V. Nikiforov, {\em The spectral radius of subgraphs of regular graphs}, Electron. J. Combin. 14 (2007).
	
	\bibitem{Nikiforov1} V. Nikiforov, {\em Max k-cut and the smallest eigenvalue}, Linear Algebra and its Applications, Vol. 504, (2016), 462--467.
	
	\bibitem{NPHard} C. H. Papadimitriou, M. Yannakakis, {\em Optimization, approximation, and complexity classes}, JCSS, Vol. 43,  (1991), 425-440.
	
	
	\bibitem{Trevisan} L. Trevisan, {\em Max cut and the smallest eigenvalue}, SIAM J. Comput. Vol. 41 (6), (2012), 1769--1786. 
	
	\bibitem{Turan}  P. Tur\'an, {\em Eine Extremalaufgabe aus der Graphentheorie}, Mat. Fiz. Lapok, Vol. 48, (1941), 436-452.
	
	
	
	
	
	
	
		
		
	\end{thebibliography}
\end{document}